\title{An Integral Inequality and  the Riccati--Bernoulli Differential Equation}
\author{Mark B. Villarino\\
Escuela de Matem\'atica, Universidad de Costa Rica,\\
2060 San Jos\'e, Costa Rica}
\date{5 December 2010}
\newtheorem{thm}{Theorem}
\newtheorem*{prob}{Problem}   
\numberwithin{equation}{section}
\def\section{\@startsection{section}{1}{\z@}{-3.5ex plus -1ex minus
			  -.2ex}{2.3ex plus .2ex}{\large\bf}}
\def\subsection{\@startsection{subsection}{2}{\z@}{-3.25ex plus -1ex
			  minus -.2ex}{1.5ex plus .2ex}{\normalsize\bf}}
\renewcommand{\@dotsep}{200} 
\renewcommand{\leq}{\leqslant}  
\newcommand{\half}{{\mathchoice{\thalf}{\thalf}{\shalf}{\shalf}}}
\newcommand{\shalf}{{\scriptstyle\frac{1}{2}}} 
\newcommand{\thalf}{\tfrac{1}{2}} 
\newcommand{\quarter}{{\mathchoice{\tfourth}{\tfourth}{\sfourth}{\sfourth}}}
\newcommand{\sfourth}{{\scriptstyle\frac{1}{4}}} 
\newcommand{\tfourth}{\tfrac{1}{4}} 
\begin{document}

\maketitle

\begin{abstract}
We apply an integral inequality to obtain a rigorous \textit{a priori}
estimate of the accuracy of the partial sum to the power series
solution of the \textsc{Riccati}--\textsc{Bernoulli} differential
equation.
\end{abstract}


\section{Introduction}

The deservedly celebrated \textsc{Riccati}--\textsc{Bernoulli}
differential equation:
\begin{equation}
\label{RB}
\boxed{y' = x^2 + y^2}
\end{equation}
has the general solution~\cite{EP}:
\begin{equation}
\label{GS}
\boxed{y(x) = 
x \,\frac{J_{\frac{3}{4}}(\half x^2) - c J_{-\frac{3}{4}}(\half x^2)}
{c J_{\frac{1}{4}}(\half x^2) + J_{-\frac{1}{4}}(\half x^2)}}
\end{equation}
where $c$ is an arbitrary constant and where
\begin{equation*}
\boxed{J_n(x) := \frac{x^n}{2^n\Gamma(n+1)} \biggl\{
1 - \frac{x^2}{2^2\cdot 1!\cdot(n+1)}
+ \frac{x^4}{2^4\cdot 2!\cdot(n+1)(n+2)} - \cdots \biggr\}}
\end{equation*}
is the \emph{Bessel function of the first kind} of order~$n$, where
$n$ is any real number, and $\Gamma(n + 1)$ is the \emph{gamma
function}. The very interesting history of~\eqref{RB} is detailed in
\textsc{Watson}'s standard treatise~\cite{WA}.

The equation \eqref{RB} is an example of a simple differential
equation whose solutions form a family of transcendental functions
which are essentially distinct from the elementary transcendents.

Unfortunately, the general solution \eqref{GS} does not easily lend
itself to a rigorous error analysis of its accuracy in a particular
interval of the independent variable.

We will show how a simple application of an \emph{integral inequality}
allows one to estimate the accuracy of the partial sum of the
\textsc{Taylor} series expansion of the solution within the latter's
interval of convergence.


\section{Cauchy's theorem}

(In this section we follow \cite{IPM}, Section~IV.5). The general
\emph{Cauchy Problem} is to solve the ordinary differential equation
initial-value problem:
\begin{equation}
\label{IVP}
\boxed{y' = f(x,y),  \quad  y(x_0) := y_0} \,.
\end{equation}
Let $f(x,y)$ be expanded in the series
\begin{equation}
\label{PS}
f(x,y) = \sum_{i,j} A_{ij} (x - x_0)^i (y - y_0)^j
\end{equation}
convergent for
\begin{equation}
\label{R}
|x - x_0| < R_1, \quad |y - y_0| < R_2, \qquad (R_1 > 0, \ \ R_2 > 0).  
\end{equation}
Then, according to \textsc{Cauchy}'s theorem in the theory of
differential equations, the problem \eqref{IVP} has a solution $y(x)$
represented by the series
\begin{equation}
\label{y(x)}
\boxed{y(x) = \sum_{k=0}^\infty \frac{y^{(k)}(x_0)}{k!} (x - x_0)^k}
\end{equation}
convergent in some neighborhood of the point~$x_0$.

\textsc{Cauchy}'s theorem allows one also to indicate the neighborhood
of the point~$x_0$, in which the series \eqref{y(x)} converges.
Namely, let $M$ be a constant such that
\begin{equation*}
|f(x,y)| \leq M
\end{equation*}
with
\begin{equation*}
|x - x_0| \leq r_1 < R_1,  \qquad  |y - y_0| \leq r_2 < R_2,
\end{equation*}
where $R_1$ and $R_2$ are numbers, defining the region \eqref{R} of
the convergence of the series \eqref{PS}, and $r_1$ and $r_2$ are some
positive numbers. \emph{Then the series \eqref{y(x)} converges for}
\begin{equation*}
|x - x_0| < r,
\end{equation*}
\emph{where}
\begin{equation}
\label{r}
\boxed{r := r_1(1 - e^{-r_2/2Mr_1})} \,.
\end{equation}
It should be noted that the \emph{true} interval of convergence is
usually much larger than~\eqref{r}.


\section{An integral inequality}
We use the notation of \eqref{IVP}.

\begin{thm}
Let $I$ be the interval $x_0 \leq x \leq x_1$, and suppose that
$f(x,y) > 0$ for all $x \in I$, and that the differential inequality
\begin{equation}
\label{DE}
y' \leq f(x_1, y(x))
\end{equation}
also holds there. Then, the integral inequality
\begin{equation}
\label{IE}
\boxed{\int_{x_0}^x
\biggl\{ \frac{1}{f(x_1, y(t))} \,\frac{dy}{dt} \biggr\} \,dt
\leq x - x_0}
\end{equation}
holds for all $x \in I$.
\end{thm}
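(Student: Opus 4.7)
The plan is to reduce the integral inequality to a pointwise inequality that can be integrated directly. The hypothesis $f(x,y) > 0$ for $x \in I$ ensures that $f(x_1, y(t)) > 0$ (since $x_1 \in I$), so division by $f(x_1, y(t))$ is legitimate and preserves the direction of inequalities.

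First, I would start with the given differential inequality \eqref{DE}, namely $y'(t) \leq f(x_1, y(t))$ for every $t \in I$. Since $f(x_1, y(t)) > 0$, dividing both sides by this positive quantity yields the pointwise bound
\begin{equation*}
\frac{1}{f(x_1, y(t))}\,\frac{dy}{dt} \leq 1, \qquad t \in I.
\end{equation*}

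Next I would integrate this inequality from $x_0$ to $x$, where $x \in I$. Because the monotonicity of the definite integral preserves inequalities between continuous (or at least integrable) integrands, we obtain
\begin{equation*}
\int_{x_0}^x \biggl\{ \frac{1}{f(x_1, y(t))}\,\frac{dy}{dt} \biggr\}\,dt
\leq \int_{x_0}^x 1\,dt = x - x_0,
\end{equation*}
which is precisely \eqref{IE}.

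There is no serious obstacle: the only subtlety is verifying that the divisor is strictly positive throughout the interval of integration, and that is built into the hypotheses (together with the implicit assumption that $y$ is differentiable on $I$, so that both sides of \eqref{DE} are well-defined and integrable). The result is essentially the observation that an inequality between integrable functions is preserved under integration, once the positivity of $f(x_1,y(t))$ has been used to normalize the right-hand side.
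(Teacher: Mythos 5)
Your proof is correct and is essentially the same argument as the paper's: the paper packages the same pointwise bound $\tfrac{y'(t)}{f(x_1,y(t))} \leq 1$ into an auxiliary function $g(x) := \int_{x_0}^x \frac{y'(t)}{f(x_1,y(t))}\,dt - (x - x_0)$ and notes $g'(x) \leq 0$ with $g(x_0) = 0$, while you integrate the bound directly; the two are the same computation.
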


\begin{proof}
This is a simple consequence of the elementary calculus sufficient
condition that a function be decreasing in an interval. Define
\begin{equation*}
g(x) := \int_{x_0}^x
\biggl\{ \frac{1}{f(x_1, y(t))} \,\frac{dy}{dt} \biggr\} \,dt
- (x - x_0).
\end{equation*}
Taking the derivative and using the fundamental theorem of calculus,
we obtain
\begin{equation*}
g'(x) = \biggl\{ \frac{1}{f(x_1, y(x))} \,\frac{dy}{dx} \biggr\} - 1.
\end{equation*}
But, the inequality \eqref{DE} shows that $g'(x) \leq 0$ for all
$x \in I$.
\end{proof}


\section{The Riccati--Bernoulli Initial Value Problem}
(This section is suggested by \cite{IPM}, Section~IV.5, but we add
some important refinements.)

\begin{prob}
It is required to find the first $11$ terms of the power series
expansion of the solution of the initial value problem
\begin{equation}
\label{IVP2}
\boxed{y' = x^2 + \frac{y^2}{4}, \quad y(0) := -1}
\end{equation}
and an interval of convergence for it.
\end{prob}

If we specialize the general solution \eqref{GS} to this initial value
problem we find the following formula for the exact solution:
\begin{equation}
\label{GS1}
\boxed{y(x) = \frac{x}{16} \cdot 
\frac{\Gamma(\frac{3}{4}) J_{\frac{3}{4}}(\half x^2)
- \sqrt{2}\,\Gamma(\frac{1}{4}) J_{-\frac{3}{4}}(\half x^2)}
{\sqrt{2}\,\Gamma(\frac{1}{4}) J_{\frac{1}{4}}(\half x^2)
+ \Gamma(\frac{3}{4}) J_{-\frac{1}{4}}(\half x^2)} } \,.
\end{equation}
Unfortunately, the computation of the power series solution on the
basis of the quotient \eqref{GS1}, although theoretically possible, is
computationally formidable.

It is easier to use the equation \eqref{IVP2} to compute the
derivatives of $y(x)$ at $x = 0$ directly:
\begin{align}
y' &= x^2 + \frac{y^2}{4} = \frac{1}{4} \,,
\label{y1} \\[\jot]
y'' &= 2 x + \frac{1}{2} yy' = -\frac{1}{8} \,,
\label{y2} \\[\jot]
y''' &= 2 + \frac{1}{2} y'^2 + \frac{1}{2} yy'' = \frac{67}{32} \,,
\label{y3} \\[\jot]
y^{(4)} &= \frac{3}{2} y'y'' + \frac{1}{2} yy''' = -\frac{35}{32} \,,
\label{y4} \\[\jot]
y^{(5)} &= \frac{3}{2} y''^2 + 2 y'y''' + \frac{1}{2} yy^{(4)}
= \frac{207}{128} \,,
\label{y5} \\[\jot]
y^{(6)} &= 5 y''y''' + \frac{5}{2} y'y^{(4)} + \frac{1}{2} yy^{(5)}
= -\frac{231}{64} \,,
\label{y6} \\[\jot]
y^{(7)} &= 5 y'''^2 + \frac{15}{2} y''y^{(4)} + 3 y'y^{(5)}
+ \frac{1}{2} yy^{(6)} = \frac{26585}{1024} \,,
\label{y7} \\[\jot]
y^{(8)} &= \frac{35}{2} y'''y^{(4)} + \frac{21}{2} y''y^{(5)}
+ \frac{7}{2} y'y^{(6)} + \frac{1}{2} yy^{(7)}
= -\frac{119475}{2048} \,,
\label{y8} \\[\jot]
y^{(9)} &= \frac{35}{2} (y^{(4)})^2 + 28 y'''y^{(5)} + 14 y''y^{(6)}
+ 4 y'y^{(7)} + \frac{1}{2} yy^{(8)} = \frac{725769}{4096} \,,
\label{y9} \\[\jot]
y^{(10)} &= 63 y^{(4)}y^{(5)} + 42 y'''y^{(6)} + 18 y''y^{(7)}
+ \frac{9}{2} y'y^{(8)} + \frac{1}{2} yy^{(9)}
= -\frac{10509885}{16384} \,.
\label{y10}
\end{align}
Therefore, by the formula \eqref{y(x)}, the first $11$ terms of the
series solution are:
\begin{align}
y(x) &= -1 + \frac{1}{4} x - \frac{1}{16} x^2 + \frac{67}{192 }x^3
- \frac{35}{768} x^4 + \frac{69}{5120} x^5 - \frac{77}{15360} x^6
+ \frac{5317}{1032192} x^7
\nonumber \\[\jot]
&\qquad - \frac{2655}{1835008} x^8 + \frac{80641}{165150720} x^9
- \frac{77851}{62914560} x^{10} +\cdots
\label{sol}
\end{align}

To find an interval of convergence of this series~\eqref{sol}, we use
the \textsc{Cauchy} theorem. If $x$ and $y$ satisfy the inequalities
\begin{equation*}
|x| \leq 0.5,  \qquad  |y + 1| \leq 1,
\end{equation*}
then we may conclude that
\begin{align*}
|f(x,y)| &\leq \bigl| 0.25[(y + 1) - 1]^2 + x^2 \bigr|
\\
&\leq 0.25(|y + 1| + 1)^2 + |x|^2
\\
&\leq 1.25 \,.
\end{align*}
Therefore, in the formula~\eqref{r}, we may take
\begin{equation*}
r_1 := 0.5,  \qquad  r_2 := 1,  \qquad  M := 1.25,
\end{equation*}
and the value of $r$ we obtain is:
\begin{equation*}
r = 0.5(1 - e^{-0.8}) = 0.2753355\dots
\end{equation*}
Therefore, \emph{the power series solution \eqref{sol} most certainly
converges for $|x| \leq 0.27$}.


\section{The Accuracy of a Partial Sum from the Integral Inequality}

We will consider the following concrete problem, although the
principles are of general applicability.

\begin{prob}
It is required to determine the accuracy of the partial sum of
degree~$9$ of the power series solution \eqref{sol} in the interval
$0 \leq x \leq 0.2$.
\end{prob}

Since the series \eqref{sol} is the \textsc{Maclaurin} expansion of
$y(x)$, we must estimate the remainder term $R_9(x)$, which we write
in the \textsc{Lagrange} form:
\begin{equation*}
\boxed{R_9(x) := \frac{y^{(10)}(\Theta_9)}{10!}\, x^{10}}
\end{equation*}
where $0 < \Theta_9 < 0.2$. We have to estimate $y^{(10)}(x)$, the
formula for which is given in \eqref{y10}, for all values of~$x$ in
the interval $0 \leq x \leq 0.2$. The formulas \eqref{y1} through
\eqref{y9} show us, finally, that \emph{we must estimate $y(x)$ itself
in $0 \leq x \leq 0.2$}.

The estimate of $y(x)$ via \emph{our integral inequality} \eqref{IE}
constitutes the novelty in this paper.

Maintaining the notation of \eqref{IE}, we see that
\begin{equation*}
x_0 := 0,  \qquad   x_1 := 0.2,
\end{equation*}
that the right hand side
\begin{equation*}
f(x,y) := x^2 + \frac{y^2}{4} > 0 
\end{equation*}
on $I$, 
and the differential inequality \eqref{DE} becomes:
\begin{equation*}
\frac{dy}{dx} \leq 0.04 + \frac{y(x)^2}{4}
\end{equation*}
Therefore, the integral inequality \eqref{IE} becomes
\begin{equation*}
\int_0^x \biggl\{ \frac{dy/dt}{0.04 + \quarter y(t)^2} \biggr\} \,dt
\leq x.
\end{equation*}
But
\begin{equation*}
\frac{dy/dt}{0.04 + \quarter y(t)^2}
= \frac{d}{dt} \biggl\{ 10 \arctan \frac{y(t)}{0.4} \biggr\}.
\end{equation*}
Therefore,
\begin{equation*}
10 \arctan \frac{y(x)}{0.4} - 10 \arctan \frac{(-1)}{0.4} \leq x,
\end{equation*}
or
\begin{equation*}
\arctan \frac{y(t)}{0.4} \leq \frac{x}{10} + \arctan \frac{(-1)}{0.4},
\end{equation*}
and taking the tangent of both sides and reducing, \emph{we obtain the
estimate}
\begin{equation}
\label{y11}
\boxed{y(x) 
\leq \frac{\frac{2}{5} \tan(\frac{x}{10}) - 1}{1 + \frac{x}{4}}}
\end{equation}
\emph{which holds for all $x \in I$}.

The function on the right-hand side of \eqref{y11} is monotonically
increasing in $I$,
and
\begin{equation*}
\frac{\frac{2}{5} \tan \bigl( \frac{0.2}{10} \bigr) - 1}
{1 + \frac{0.2}{4}} = -0.9447608\dots < - 0.94,
\end{equation*}
and \emph{we have proved that the following inequality is true for all
$x \in I$}:
\begin{equation}
\label{Y1}
\boxed{-1 \leq y(x) \leq -0.94} \,.
\end{equation}
(\emph{Note:} the true value of $y(0.2)$ is
\begin{equation*}
y(0.2) = -0.9497771\dots,
\end{equation*}
so the estimate \eqref{y11}, with an error of $-0.00501\dots$, or
about $0.53\%$, is quite good!)

Now we must estimate $y'(x),\dots, y^{(10)}(x)$ using the formulas
\eqref{y1} through \eqref{y10}.


\begin{thm}
The following inequalities are valid for all $x\in I$:
\begin{alignat*}{2}
-1 &\leq y(x) &&\leq -0.94, \\
0.22 &\leq y'(x) &&\leq 0.29, \\
-0.15 &\leq y''(x) &&\leq 0.3, \\
1.87 &\leq y'''(x) &&\leq 2.12 \\
-1.13 &\leq y^{(4)}(x) &&\leq -0.74, \\
1.17 &\leq y^{(5)}(x) &&\leq 1.93, \\
-3.38 &\leq y^{(6)}(x) &&\leq 2.23, \\
14.59 &\leq y^{(7)}(x) &&\leq 27.12, \\
-61.96 &\leq y^{(8)}(x) &&\leq -22.73, \\
92.03 &\leq y^{(9)}(x) &&\leq 146.76, \\
\end{alignat*}
and finally
\begin{equation*}
\boxed{-665.9 < y^{(10)}(x) < 281} \,.
\end{equation*}
\end{thm}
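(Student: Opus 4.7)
The plan is to prove the cascade of inequalities by induction on the order of the derivative, propagating the interval bounds through the recursion formulas \eqref{y1}--\eqref{y10}. The base case is already at hand: the upper bound $y(x) \leq -0.94$ is the content of~\eqref{Y1}, while the lower bound $y(x) \geq -1$ follows from the initial condition $y(0) = -1$ together with the fact that $y'(x) = x^2 + y(x)^2/4 \geq 0$ on~$I$, so $y$ is nondecreasing on~$I$.

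For the inductive step I would substitute, at each stage, the already-validated interval enclosures of $y, y', \ldots, y^{(k-1)}$ together with $x \in [0, 0.2]$ into the right-hand side of the formula for $y^{(k)}$ in \eqref{y1}--\eqref{y10} and carry out the arithmetic term by term. For example, $y' = x^2 + y^2/4$ with $y \in [-1, -0.94]$ gives $y' \in [0.94^2/4,\, 0.04 + 1/4] \subset [0.22, 0.29]$. Then $yy' \in [-0.29, -0.2068]$ (since $y \leq 0$ and $y' \geq 0$), and hence $y'' = 2x + yy'/2 \in [-0.145,\, 0.2966] \subset [-0.15, 0.3]$. Continuing in this fashion yields the stated inclusions, one derivative at a time, culminating in the final enclosure $-665.9 < y^{(10)}(x) < 281$.

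The main obstacle is essentially bookkeeping. From $y''$ onward several of the interval enclosures straddle zero, so every product $y^{(j)} y^{(k)}$ appearing in the recursion must be bounded by taking the minimum and maximum of the four endpoint pairings, not by naively multiplying endpoints; in particular the sign-monotonicity trick that worked for $yy'$ is no longer available once either factor changes sign. I would also round outward at each stage (down for lower bounds, up for upper bounds) so that the displayed two- or three-digit numbers truly enclose the exact interval range, and would keep one extra digit of working precision internally to prevent accumulated rounding from eroding the final $y^{(10)}$ bound, where the coefficients in \eqref{y10} are as large as $63$. A clean write-up would present, for each $k$, a short table listing the interval contribution of each summand on the right-hand side of the formula for $y^{(k)}$, followed by the interval sum, so that the propagation is transparent and easily audited.
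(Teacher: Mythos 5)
Your proposal is correct and is essentially the paper's own argument: the paper likewise takes the enclosure \eqref{Y1} as the starting point and then propagates worst-case endpoint (interval) bounds sequentially through the formulas \eqref{y1}--\eqref{y10}, choosing the extreme endpoint pairing for each summand and rounding outward, exactly as you describe. Your explicit justification of the lower bound $y(x)\geq -1$ via $y'\geq 0$ and $y(0)=-1$ is a small clarification the paper leaves implicit, but it is not a different method.
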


\begin{proof}
All of our estimates come from worst-case values applied to each of
the summands in the formulas.

By \eqref{y1},
\begin{equation*}
y' = x^2 + \frac{y^2}{4} \,.
\end{equation*}
Therefore, using \eqref{Y1}, we conclude that for all $x \in I$,
\begin{align*}
y' &<  0.2^2 + \frac{(-1)^2}{4} = 0.29,
\intertext{while}
y' &>  0^2 + \frac{(-.94)^2}{4} = 0.2209 > 0.22.
\end{align*}
Therefore, we obtain the bounds
\begin{equation}
\label{YY1}
\boxed{0.22 < y'(x) < 0.29} \,.
\end{equation}

By \eqref{y2},
\begin{equation*}
y''  =  2x + \frac{1}{2} yy' \,.
\end{equation*}
Therefore, using \eqref{YY1}, we conclude that for all $x\in I$, 
\begin{align*}
y'' &<  2(0.02) + \frac{1}{2}(-0.94)(0.22) = 0.2966 < 0.3,
\intertext{while}
y'' &>  2(0) + \frac{1}{2}(-1)(0.29) = -0.145 >  -0.15.
\end{align*}
Therefore, we obtain the bounds
\begin{equation}
\label{YY2}
\boxed{-0.15 < y''(x) < 0.3} \,.
\end{equation}

By \eqref{y3},
\begin{equation*}
y''' = 2+\frac{1}{2}y'^2+\frac{1}{2}yy'' \,.
\end{equation*}
Therefore, using \eqref{YY1} and \eqref{YY2}, we
conclude that for all $x \in I$,
\begin{align*}
y''' &< 2 + \frac{1}{2}(0.29)^2 + \frac{1}{2}(-0.15)(-1) = 2.11705
< 2.12,
\intertext{while}
y''' &> 2 + \frac{1}{2}(0.22)^2 + \frac{1}{2}(0.3)(-1) = 1.8742 > 1.87.
\end{align*}
Therefore, we obtain the bounds
\begin{equation}
\label{YY3}
\boxed{1.87 < y'''(x) < 2.12} \,.
\end{equation}

By \eqref{y4},
\begin{equation*}
y^{(4)} = \frac{3}{2} y'y'' + \frac{1}{2}yy''' \,.
\end{equation*}
Therefore, using \eqref{YY1}, \eqref{YY2} and \eqref{YY3},
we conclude that for all $x \in I$,
\begin{align*}
y^{(4)} &< \frac{3}{2}(0.29)(0.3) + \frac{1}{2}(-0.94)(1.87) = -0.7484
< -.074,
\intertext{while}
y^{(4)} &> \frac{3}{2}(0.29)(-0.15) + \frac{1}{2}(-1)(2.12) = -1.12525
> -1.13.
\end{align*}
Therefore, we obtain the bounds
\begin{equation}
\label{YY4}
\boxed{-1.13 < y^{(4)}(x) < -0.74} \,.
\end{equation}

By \eqref{y5},
\begin{equation*}
y^{(5)} = \frac{3}{2} y''^2 + 2y'y''' + \frac{1}{2} yy^{(4)} \,.
\end{equation*}
Therefore, using \eqref{YY1} through \eqref{YY4}, we conclude that for
all $x \in I$,
\begin{align*}
y^{(5)} &< \frac{3}{2}(0.3)^2 + 2(0.29)(2.12) + \frac{1}{2}(-1)(-1.13)
= 1.9296 < 1.93,
\intertext{while}
y^{(5)} &> \frac{3}{2}(0)^2 + 2(0.22)(1.87) + \frac{1}{2}(-0.94)(-0.74)
= 1.1706 > 1.17.
\end{align*}
Therefore, we obtain the bounds
\begin{equation}
\label{YY5}
\boxed{1.17 < y^{(5)}(x) < 1.93} \,.
\end{equation}

By \eqref{y6},
\begin{equation*}
y^{(6)} = 5y''y''' + \frac{5}{2} y'y^{(4)} + \frac{1}{2} yy^{(5)} \,.
\end{equation*}
Therefore, using \eqref{YY1} through \eqref{YY5}, we conclude that for
all $x \in I$,
\begin{align*}
y^{(6)} 
&< 5(0.3)(2.12) + \frac{5}{2}(0.22)(-0.74) + \frac{1}{2}(-0.94)(1.17)
= 2.2231 < 2.23,
\intertext{while}
y^{(6)}
&> 5(-0.15)(2.12) + \frac{5}{2}(0.29)(-1.13) + \frac{1}{2}(-1)(1.93)
= -3.37425 > -3.38.
\end{align*}
Therefore, we obtain the bounds
\begin{equation}
\label{YY6}
\boxed{-3.38 < y^{(6)}(x) < 2.23} \,.
\end{equation}

By \eqref{y7},
\begin{equation*}
y^{(7)} = 5y'''^2 + \frac{15}{2} y''y^{(4)} + 3y'y^{(5)}
+ \frac{1}{2} yy^{(6)} \,.
\end{equation*}
Therefore, using \eqref{YY1} through \eqref{YY6}, we conclude that for
all $x \in I$,
\begin{align*}
y^{(7)} &< 5(2.12)^2 + \frac{15}{2}(-0.15)(-1.13) + 3(0.29)(1.93)
+ \frac{1}{2}(-1)(-3.38)
\\
&= 27.11235 < 27.12,
\intertext{while}
y^{(7)} &> 5(1.87)^2 + \frac{15}{2}(0.3)(-1.13) + 3(0.22)(1.17)
+ \frac{1}{2}(-1)(2.23)
\\
&= 14.5992 > 14.59.
\end{align*}
Therefore, we obtain the bounds
\begin{equation}
\label{YY7}
\boxed{14.59 < y^{(7)}(x) < 27.12} \,.
\end{equation}

By \eqref{y8},
\begin{equation*}
y^{(8)} = \frac{35}{2} y'''y^{(4)} + \frac{21}{2} y''y^{(5)}
+ \frac{7}{2} y'y^{(6)} + \frac{1}{2} yy^{(7)} \,.
\end{equation*}
Therefore, using \eqref{YY1} through \eqref{YY7}, we conclude that for
all $x \in I$,
\begin{align*}
y^{(8)} &< \frac{35}{2}(1.87)(-0.74) + \frac{21}{2}(0.3)(1.93) 
+ \frac{7}{2}(0.29)(2.23) + \frac{1}{2}(-0.94)(14.59),
\\
&= -22.73085 < -22.73,
\intertext{while}
y^{(8)} &> \frac{35}{2}(2.12)(-1.13) + \frac{21}{2}(-0.15)(1.93)
+ \frac{7}{2}(0.29)(-3.38) + \frac{1}{2}(-1)(27.12)
\\
&= -61.95345 > -61.96.
\end{align*}
Therefore, we obtain the bounds
\begin{equation}
\label{YY8}
\boxed{-61.96 < y^{(8)}(x) < -22.73} \,.
\end{equation}

By \eqref{y9},
\begin{equation*}
y^{(9)} = \frac{35}{2} (y^{(4)})^2 + 28 y'''y^{(5)} + 14 y''y^{(6)}
+ 4 y'y^{(7)} + \frac{1}{2} yy^{(8)} \,.
\end{equation*}
Therefore, using \eqref{YY1} through \eqref{YY8}, we conclude that for
all $x \in I$,
\begin{align*}
y^{(9)} &< \frac{35}{2}(-1.13)^2 + 28(2.12)(1.93) + 14(0.3)(2.23)
+ 4(0.29)(27.12) + \frac{1}{2}(-1)(-61.96)
\\
&= 146.75575 < 146.76,
\intertext{while}
y^{(9)} &> \frac{35}{2}(-0.74)^2 + 28(1.87)(1.17) + 14(-0.15)(2.23)
+ 4(.22)(14.59) + \frac{1}{2}(-0.94)(-27.73)
\\
&= 92.0335 > 92.03.
\end{align*}
Therefore, we obtain the bounds
\begin{equation}
\label{YY9}
\boxed{92.03 < y^{(9)}(x) < 146.76} \,.
\end{equation}

By \eqref{y10},
\begin{equation*}
y^{(10)} = 63 y^{(4)}y^{(5)} + 42 y'''y^{(6)} + 18 y''y^{(7)}
+ \frac{9}{2} y'y^{(8)} + \frac{1}{2} yy^{(9)} \,.
\end{equation*}
Therefore, using \eqref{YY1} through \eqref{YY9}, we conclude that for
all $x \in I$,
\begin{align*}
y^{(10)} &< 63(-0.74)(1.17) + 42(2.12)(2.23) + 18(0.3)(27.12)
+ \frac{9}{2}(0.22)(-22.73) 
\\
&\qquad\qquad + \frac{1}{2}(-0.94)(92.03)
\\
&= 280.9922 < 281,
\intertext{while}
y^{(10)} &> 63(-1.13)(1.93) + 42(2.12)(-3.38) + 18(-0.15)(27.12)
+ \frac{9}{2}(0.29)(-61.96) 
\\
&\qquad\qquad + \frac{1}{2}(-1)(146.76)
\\
&= -665.8137 > -665.9 \,.
\end{align*}
Therefore, we obtain the bounds
\begin{equation}
\label{YY10}
\boxed{-665.9 < y^{(10)}(x) < 281} \,.
\end{equation}
The inequality \eqref{YY10} was the goal of this long and detailed
computation!
\end{proof}

Now we can state the accuracy of the partial sum.

\begin{thm}
For all $x$ in $0 \leq x \leq 0.2$, the partial sum of degree~$9$:
\begin{align}
y_9(x) &:= -1 + \frac{1}{4}\,x - \frac{1}{16}\,x^2
+ \frac{67}{192}\,x^3 - \frac{35}{768}\,x^4 + \frac{69}{5120}\,x^5 
- \frac{77}{15360}\,x^6 + \frac{5317}{1032192}\,x^7
\nonumber \\
&\qquad - \frac{2655}{1835008}\,x^8 + \frac{80641}{165150720}\,x^9
\label{soll}
\end{align}
approximates the true value, $y(x)$, of the solution series
\eqref{sol}, with an error that does not exceed $2$~units in the
eleventh decimal place.
\end{thm}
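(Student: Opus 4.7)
The plan is to apply \textsc{Taylor}'s theorem with the \textsc{Lagrange} form of the remainder at $x_0 := 0$, and then insert the bound on $y^{(10)}$ already established in the previous theorem. By construction,
\begin{equation*}
y(x) - y_9(x) = R_9(x) = \frac{y^{(10)}(\Theta_9)}{10!}\, x^{10},
\end{equation*}
for some $\Theta_9$ with $0 < \Theta_9 < x \leq 0.2$.

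The first step is to extract a single worst-case bound on $|y^{(10)}|$ from the inequality \eqref{YY10}. Since $-665.9 < y^{(10)}(x) < 281$ on $I$, we certainly have $|y^{(10)}(\Theta_9)| < 665.9$. The second step is to bound the monomial: since $0 \leq x \leq 0.2$, we have $x^{10} \leq (0.2)^{10} = 1.024 \times 10^{-7}$. Combining these with $10! = 3\,628\,800$, the plan is to conclude
\begin{equation*}
|R_9(x)| \leq \frac{665.9 \times 1.024 \times 10^{-7}}{3\,628\,800} < 2 \times 10^{-11},
\end{equation*}
which is precisely the claim that the error does not exceed $2$ units in the eleventh decimal place.

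There is no real obstacle here beyond the careful arithmetic verification that the product $665.9 \cdot (0.2)^{10}/10!$ indeed falls below $2\times 10^{-11}$; the entire conceptual work has already been done in the preceding theorem, whose estimate of $y^{(10)}$ on $I$ is exactly what the \textsc{Lagrange} remainder requires. The only subtlety worth noting is that the negative lower bound $-665.9$ dominates the positive upper bound $281$ in absolute value, so it is $-665.9$ that governs the worst case. Once that is observed, the estimate drops out in one line.
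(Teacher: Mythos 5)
Your proposal is correct and coincides with the paper's own proof: both write the error as the Lagrange remainder $R_9(x)=\frac{y^{(10)}(\Theta_9)}{10!}x^{10}$, take $665.9$ from \eqref{YY10} as the governing worst-case bound on $|y^{(10)}|$, and verify that $\frac{665.9}{10!}(0.2)^{10}\approx 1.88\cdot 10^{-11}<2\cdot 10^{-11}$. Nothing further is needed.
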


\begin{proof}
The estimate
\begin{equation*}
|R_9(x)| = \biggl| \frac{y^{(10)}(\Theta_9)}{10!}\,x^{10} \biggr|
\leq \frac{665.9}{10!}\,(0.2)^{10} = 1.878\ldots\cdot 10^{-11}
< 2\cdot 10^{-11}
\end{equation*}
completes the proof.
\end{proof}


\section{Conclusions}

Our integral inequality \eqref{IE} can be applied to wide classes of
differential equations.

For example, our method allows us to prove that in the interval
$0 \leq x \leq 0.4$ the polynomial
\begin{equation*}
\overline{y}(x) := 1 + \frac{x}{4} + \frac{3}{16}\,x^2
+ \frac{7}{192}\,x^3 + \frac{1}{96}\,x^4 + \frac{1}{200}\,x^5
\end{equation*}
approximates the true solution, $y(x)$, of the initial value problem
\begin{equation*}
4y' = x+y^2,  \qquad  y(0) := 1,
\end{equation*}
with an error that does not exceed $2$~units in the fifth decimal
place.

We chose the \textsc{Riccati}--\textsc{Bernoulli} equation because it
illustrates the process so perfectly and because a direct estimate of
the accuracy of the partial sum of the series solution is troublesome.

We did not investigate the accuracy for the \emph{negative} half of
the interval, i.e., for $-0.2 \leq x \leq 0$, which we leave as an
exercise for the reader. The only change occurs in the estimate of
$y''(x)$ since then $x$ can take negative values.

Finally, we observe that we did not exploit the \emph{sign} of the
error. In fact, our estimate \eqref{YY10} allows us to say that the
error we commit is between an error in \emph{defect} smaller than
$2$~units in the eleventh decimal place and an error in \emph{excess}
smaller than $8$~units in the twelfth decimal place. Therefore we can
\emph{centralize the error} by adding the term
\begin{equation*}
\frac{281 - 665.9}{2\cdot 10!}\,x^{10}
= - \frac{1283}{24192000}\,x^{10}
\end{equation*}
to our polynomial \eqref{soll} and obtain an approximating polynomial
whose maximum error is between $\pm 1.4$~units in the $11$th decimal
place.

\subsubsection*{Acknowledgment}
We thank Joseph C. V\'arilly for a helpful discussion. Support from
the Vicerrector\'{\i}a de Investigaci\'on of the University of Costa
Rica is acknowledged.


\end{document}